\documentclass[preprint,12pt]{elsarticle}
\usepackage[T1]{fontenc}
\usepackage[utf8]{inputenc}
\usepackage{lmodern}
\usepackage{amsmath,amssymb,amsthm}
\usepackage{microtype}
\usepackage{xurl}
\usepackage{hyperref}
\usepackage{bookmark}
\numberwithin{equation}{section}

\newtheorem{theorem}{Theorem}[section]
\newtheorem{proposition}[theorem]{Proposition}
\newtheorem{lemma}[theorem]{Lemma}
\newtheorem{corollary}[theorem]{Corollary}
\newtheorem{example}[theorem]{Example}
\theoremstyle{definition}\newtheorem{definition}[theorem]{Definition}
\theoremstyle{remark}\newtheorem{remark}[theorem]{Remark}

\journal{Journal of Mathematical Analysis and Applications}
\hypersetup{pdftitle={Rigidity of Euclidean Minimal Hypersurfaces under Nonuniform Diagonal Dilations},pdfauthor={Jongha Lee, Suhwan Lee, Jae Won Lee},hidelinks}

\begin{document}
\begin{frontmatter}

\title{Rigidity of Euclidean Minimal Hypersurfaces under Nonuniform Diagonal Dilations}

\author[chosun]{Jongha Lee}
\ead{jhlee.eco@gmail.com}

\author[krei]{Suhwan Lee}
\ead{suhwan8352@krei.re.kr}

\author[gnu]{Jae Won Lee\corref{cor1}}
\ead{leejaew@gnu.ac.kr}
\cortext[cor1]{Corresponding author.}

\affiliation[chosun]{organization={Department of International Trade, Chosun University},
            addressline={76 Chosundae 5-gil, Dong-gu},
            city={Jeonnam-Gwangju},
            postcode={61452},
            country={Republic of Korea}}
\affiliation[krei]{organization={Korea Rural Economic Institute},
            addressline={601 Bitgaram-ro},
            city={Naju-si, Jeonnam-Gwangju},
            postcode={58217},
            country={Republic of Korea}}
\affiliation[gnu]{organization={Department of Mathematics Education and RINS, Gyeongsang National University},
            addressline={501 Jinju-daero},
            city={Jinju},
            postcode={52828},
            country={Republic of Korea}}

\begin{abstract}
Let $n\ge3$ and $D_t=\operatorname{diag}(t^{g_1},\ldots,t^{g_n})$ be a positive diagonal dilation family. We study connected embedded Euclidean hypersurfaces whose diagonal images are minimal. The level-set minimality operator splits into coefficients indexed by the pair sums $g_i+g_j$. Under pair-sum nonresonance, minimality at only $\binom n2$ distinct dilation parameters forces all pair coefficients to vanish. A dimension-reduction argument then shows, without any hypothesis on the coordinate components of the normal, that the second fundamental form vanishes identically. This yields an affine characterization. Repeated-weight helicoidal examples in every dimension and a resonant quadratic cone show that curvature cancellation can survive in genuinely nonuniform families. An application gives a finite-output-level rigidity criterion and an explicit representation for weighted-homogeneous production functions with minimal isoquants.
\end{abstract}

\begin{keyword}
minimal hypersurface \sep diagonal dilation \sep rigidity \sep resonance \sep second fundamental form \sep level-set mean curvature \sep quasi-homogeneous production function
\MSC[2020] 53A10 \sep 49Q05 \sep 35B06 \sep 91B38
\end{keyword}

\end{frontmatter}
\section{Introduction}\label{sec:intro}

Euclidean minimal hypersurfaces are invariant under rigid motions and uniform similarities; see, for example, \cite{Osserman1986,ColdingMinicozzi2011}. They are not generally preserved by nonconformal linear maps. This raises a simple rigidity question: what can a hypersurface look like if several members of a genuinely nonuniform diagonal dilation family are again minimal?

We consider
\begin{equation}\label{eq:dilation}
D_t=\operatorname{diag}(t^{g_1},\ldots,t^{g_n}),\qquad t>0,
\end{equation}
with positive weights $g_1,\ldots,g_n$. If all weights are equal, $D_t$ is an ordinary Euclidean dilation and no rigidity beyond minimality can be expected. The point of the present problem is therefore the interaction between Euclidean minimality and unequal coordinate scalings.

For a regular level hypersurface $\Sigma=\{F=0\}$, the useful quantity is
\begin{equation}\label{eq:minop-intro}
\mathcal M[F]=|\nabla F|^2\Delta F-\operatorname{Hess}F(\nabla F,\nabla F).
\end{equation}
Under \eqref{eq:dilation}, the first and second derivatives of the transported defining function acquire powers $t^{-g_i}$ and $t^{-(g_i+g_j)}$. After cancellation of the diagonal terms, $\mathcal M$ becomes a sum indexed by unordered pairs $\{i,j\}$, with exponent $2(g_i+g_j)$. This is the source of the arithmetic condition used below.

When all pair sums $g_i+g_j$ are distinct, the resulting exponential functions form a Chebyshev system. Consequently, $N=\binom n2$ distinct dilation parameters already separate all $N$ pair coefficients. A second step, which is independent of the arithmetic argument, shows that simultaneous vanishing of these coefficients forces the full second fundamental form to vanish. The latter implication is proved here without a nonvanishing assumption on the coordinate components of the normal; this removes a coordinate condition that is needed only by the single-chart recovery argument, not by the final rigidity mechanism.

The condition that the pair sums be distinct will be called \emph{pair-sum nonresonance}. The terminology is local to this paper. It is unrelated to anisotropic minimal-surface theory, where anisotropy usually refers to direction-dependent surface energies; compare \cite{MooneyYang2024}. Here the area functional is always Euclidean and the nonuniformity enters only through the ambient linear maps.

Several nearby questions have been studied from different viewpoints. Alc\'azar and Muntingh treat affine equivalence and symmetry detection for classes including minimal surfaces \cite{AlcazarMuntingh2022}, while Hasanis and Vlachos study hypersurfaces satisfying the spectral equation $\Delta x=Ax+B$ \cite{HasanisVlachos1992}. Neither setting asks for preservation of Euclidean minimality under finitely many members of a one-parameter nonuniform diagonal subgroup. The coefficient separation by the pair sums in the present argument appears to provide a different mechanism.

The last part of the paper applies this rigidity mechanism to weighted-homogeneous production functions. Differential-geometric production theory contains graph-based classifications for quasi-sum and homogeneous production models \cite{Chen2012QuasiSum,ChenVilcu2013}, while minimal isoquants have been studied for quasi-product, homogeneous, and quasi-sum models \cite{NeacsuEtAl2024,FuLuo2025}. Luo and Wang also classify minimal separable hypersurfaces arising in quasisum production models, including quadratic-cone and higher-dimensional catenoid types \cite{LuoWang2025}. Those results and the present one have different hypotheses: here the central requirement is preservation of Euclidean minimality along a weighted dilation orbit. Ordinary homogeneity, $g_1=\cdots=g_n$, is maximally resonant; the theorem below concerns the opposite regime in which the pair sums separate. This contrast explains why rich homogeneous minimal-isoquant families can coexist with the affine rigidity proved here. The resonant regime is not treated as a single opposite case: explicit examples below show failure of rigidity for repeated weights and for a four-index resonance, while a complete classification of resonant patterns is left open.

The paper has four sections and no subsidiary subdivision. Section 2 derives the level-set identities, proves finite coefficient separation, and establishes the dimension-reduction lemma that recovers the whole second fundamental form. Section 3 gives the affine characterization, identifies coordinate hyperplanes as the only invariant minimal examples in the nonresonant regime, develops resonant counterexamples, and applies the result to production functions. Section 4 records the precise scope of the result and the remaining resonance problem.

\section{Geometric framework and the diagonal-dilation identity}\label{sec:geometry}

The proof is most transparent at the level of a defining function. We first isolate the pairwise coefficients created by a diagonal dilation and verify that their vanishing is geometric. The arithmetic separation and the geometric recovery of the second fundamental form are then treated as two independent steps.

Throughout the paper, a hypersurface means an \emph{embedded} $C^2$ hypersurface unless stated otherwise. We assume $n\ge3$ in the main results; the base case $n=2$ will also be used in an induction argument.

\begin{definition}\label{def:regular}
Let $\Sigma\subset\mathbb R^n$ be a $C^2$ hypersurface. A $C^2$ function $F$ on a neighborhood $U$ of a point of $\Sigma$ is a \emph{regular defining function} if
\begin{equation}\label{eq:defining}
\Sigma\cap U=\{x\in U:F(x)=0\},\qquad \nabla F\ne0\quad\text{on }\Sigma\cap U.
\end{equation}
\end{definition}
Standard level-set terminology may be found in \cite{Lee2018}.

Set $\nu=\nabla F/|\nabla F|$. If $E_1,\ldots,E_{n-1}$ is a local orthonormal frame of $T\Sigma$, then, up to the choice of orientation,
\begin{equation}\label{eq:IIlevel}
II(E_\alpha,E_\beta)=\frac{\operatorname{Hess}F(E_\alpha,E_\beta)}{|\nabla F|},
\end{equation}
and hence
\begin{equation}\label{eq:meanlevel}
(n-1)H=\frac{|\nabla F|^2\Delta F-\operatorname{Hess}F(\nabla F,\nabla F)}{|\nabla F|^3}.
\end{equation}
We therefore set
\begin{equation}\label{eq:minop}
\mathcal M[F]=|\nabla F|^2\Delta F-\operatorname{Hess}F(\nabla F,\nabla F).
\end{equation}

\begin{definition}\label{def:minimal}
A $C^2$ hypersurface is \emph{Euclidean minimal} if its mean-curvature vector vanishes identically, equivalently if its scalar mean curvature is zero for either local choice of unit normal.
\end{definition}

\begin{proposition}\label{prop:minop}
A regular level hypersurface $\Sigma=\{F=0\}$ is minimal if and only if $\mathcal M[F]=0$ along $\Sigma$.
\end{proposition}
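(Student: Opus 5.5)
The plan is to derive the level-set formula \eqref{eq:meanlevel} carefully and then read off the equivalence. Since $|\nabla F|\neq0$ on $\Sigma\cap U$, the factor $|\nabla F|^{-3}$ is finite and nonzero. Once \eqref{eq:meanlevel} holds, $H=0$ at a point of $\Sigma$ exactly when $\mathcal M[F]=0$ there. Minimality in the sense of Definition~\ref{def:minimal} does not depend on orientation, because reversing $\nu$ only changes the sign of $H$. So the whole content of the proposition is the verification of \eqref{eq:IIlevel} and \eqref{eq:meanlevel}.

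First I would establish \eqref{eq:IIlevel}. Take $X,Y$ tangent to $\Sigma$ and extend $Y$ as a tangent field. Because $\langle \nabla F, Y\rangle=0$ along $\Sigma$, differentiating in the direction $X$ (which is tangent, so the differentiation stays on $\Sigma$) gives
\[
\langle D_X\nabla F, Y\rangle+\langle \nabla F, D_XY\rangle=0.
\]
The first term equals $\operatorname{Hess}F(X,Y)$. With the convention $II(X,Y)=-\langle D_XY,\nu\rangle=\langle D_X\nu,Y\rangle$, this gives $II(X,Y)=\operatorname{Hess}F(X,Y)/|\nabla F|$, up to the sign fixed by the orientation. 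Only $C^2$ regularity of $F$ is used.

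Next I would take the trace to get \eqref{eq:meanlevel}. Complete the tangent frame $E_1,\dots,E_{n-1}$ by $\nu$ to an orthonormal frame of $\mathbb R^n$. Then
\[
\Delta F=\sum_\alpha \operatorname{Hess}F(E_\alpha,E_\alpha)+\operatorname{Hess}F(\nu,\nu),
\]
and therefore
\[
(n-1)H=\sum_\alpha II(E_\alpha,E_\alpha)=\frac{\Delta F-\operatorname{Hess}F(\nu,\nu)}{|\nabla F|}.
\]
Substituting $\nu=\nabla F/|\nabla F|$ and putting everything over $|\nabla F|^3$ yields \eqref{eq:meanlevel}, and hence the identity $(n-1)H=\mathcal M[F]/|\nabla F|^3$ on $\Sigma\cap U$.

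The only step needing care, though it is mild, is the sign convention and the fact that \eqref{eq:IIlevel} does not depend on how $F$ is chosen. Any two regular defining functions near a point differ by a nonvanishing factor on $\Sigma$ to first order: they have parallel gradients there, and their Hessians agree on $T\Sigma$ up to that factor, because the terms involving $\nabla F$ are killed by tangent vectors. Since the statement only concerns vanishing of $H$, the sign and the choice of $F$ are irrelevant. Finally, minimality is a local condition along $\Sigma$, so the pointwise equivalence $H=0\iff\mathcal M[F]=0$ gives the proposition.
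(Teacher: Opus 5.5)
Your proof is correct and matches the paper's: the paper simply invokes the level-set formula \eqref{eq:meanlevel} together with $|\nabla F|\ne0$, while you also derive \eqref{eq:IIlevel} and \eqref{eq:meanlevel}, by differentiating $\langle\nabla F,Y\rangle=0$ along $\Sigma$ and then tracing the Hessian over a tangent frame completed by $\nu$. Your final paragraph on independence of the defining function is unnecessary, because that derivation already holds for every regular defining function.
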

\begin{proof}
By \eqref{eq:meanlevel} and the regularity condition $|\nabla F|\ne0$, one has $H=0$ if and only if the numerator in \eqref{eq:meanlevel} vanishes. That numerator is precisely $\mathcal M[F]$.
\end{proof}

\begin{definition}\label{def:dilation}
For positive weights $g_1,\ldots,g_n$, the family \eqref{eq:dilation} is called a \emph{positive diagonal dilation family}. It is \emph{uniform} when all weights are equal and \emph{nonuniform} otherwise.
\end{definition}

The image $D_t\Sigma$ is locally defined on $D_tU$ by $F_t(y)=F(D_t^{-1}y)$. At $y=D_tx$,
\begin{align}
(F_t)_i(D_tx)&=t^{-g_i}F_i(x),\label{eq:firsttrans}\\
(F_t)_{ij}(D_tx)&=t^{-(g_i+g_j)}F_{ij}(x).\label{eq:secondtrans}
\end{align}
For $i<j$ define
\begin{equation}\label{eq:Cij}
C_{ij}[F]=F_j^2F_{ii}+F_i^2F_{jj}-2F_iF_jF_{ij}.
\end{equation}

\begin{proposition}[diagonal-dilation identity]\label{prop:dilationidentity}
For every $x\in\Sigma$,
\begin{equation}\label{eq:dilationidentity}
\mathcal M[F_t](D_tx)=\sum_{1\le i<j\le n}t^{-2(g_i+g_j)}C_{ij}[F](x).
\end{equation}
\end{proposition}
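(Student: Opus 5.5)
The plan is to substitute the transformation rules \eqref{eq:firsttrans} and \eqref{eq:secondtrans} directly into the definition \eqref{eq:minop} of $\mathcal M$. Then we regroup the resulting double sum over index pairs. Writing $a_i=t^{-g_i}$, at $y=D_tx$ we have $(F_t)_i=a_iF_i$ and $(F_t)_{ij}=a_ia_jF_{ij}$. Hence
\begin{equation*}
\mathcal M[F_t](D_tx)=\sum_{j}a_j^2F_j^2\sum_i a_i^2F_{ii}-\sum_{i,j}a_iF_i\,a_ia_jF_{ij}\,a_jF_j
=\sum_{i,j}a_i^2a_j^2\bigl(F_j^2F_{ii}-F_iF_jF_{ij}\bigr).
\end{equation*}
This expresses the dilated operator as a single double sum over ordered pairs $(i,j)$, with a common factor $a_i^2a_j^2=t^{-2(g_i+g_j)}$ in each term.

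Next we split the double sum into diagonal and off-diagonal parts. For $i=j$ the summand is $F_i^2F_{ii}-F_i^2F_{ii}=0$; this is the cancellation of diagonal terms mentioned in the introduction. For $i\ne j$ we group the ordered pairs $(i,j)$ and $(j,i)$ into the unordered pair $\{i,j\}$ with $i<j$. Both carry the same weight $t^{-2(g_i+g_j)}$. Using the symmetry $F_{ij}=F_{ji}$, which holds since $F\in C^2$, their combined contribution is
\begin{equation*}
F_j^2F_{ii}+F_i^2F_{jj}-2F_iF_jF_{ij}=C_{ij}[F](x).
\end{equation*}
This is exactly \eqref{eq:Cij}, and summing over $i<j$ yields \eqref{eq:dilationidentity}.

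There is no real obstacle here; the computation is purely algebraic and holds at every point of $U$, not only on $\Sigma$. The only step needing care is the bookkeeping that justifies \eqref{eq:firsttrans} and \eqref{eq:secondtrans}. The chain rule applied to $F_t=F\circ D_t^{-1}$ gives these, since $D_t^{-1}$ is linear and diagonal with entries $t^{-g_i}$, so no second-derivative terms from the map arise. The other point to check is that both the Laplacian and the Hessian quadratic form are taken in the Euclidean coordinates $y$. The restriction to $x\in\Sigma$ in the statement is only what matters for Proposition~\ref{prop:minop}.
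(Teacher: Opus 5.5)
Your proposal is correct and follows essentially the same route as the paper: substitute the chain-rule formulas \eqref{eq:firsttrans}--\eqref{eq:secondtrans} into $\mathcal M$, write both terms as double sums with the common factor $t^{-2(g_i+g_j)}$, observe that the $i=j$ terms cancel, and group $(i,j)$ with $(j,i)$ to obtain $C_{ij}[F]$. Your added remark that the identity is purely algebraic and holds at every point of $U$, not just on $\Sigma$, is accurate.
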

\begin{proof}
Equations \eqref{eq:firsttrans}--\eqref{eq:secondtrans} give
\begin{align}
|\nabla F_t|^2(D_tx)&=\sum_i t^{-2g_i}F_i^2,\label{eq:gradtrans}\\
\Delta F_t(D_tx)&=\sum_j t^{-2g_j}F_{jj}.\label{eq:laptrans}
\end{align}
Thus
\begin{equation}\label{eq:producttrans}
|\nabla F_t|^2\Delta F_t=\sum_{i,j}t^{-2(g_i+g_j)}F_i^2F_{jj},
\end{equation}
whereas
\begin{equation}\label{eq:hesstrans}
\operatorname{Hess}F_t(\nabla F_t,\nabla F_t)=\sum_{i,j}t^{-2(g_i+g_j)}F_iF_jF_{ij}.
\end{equation}
The $i=j$ terms cancel. Grouping $(i,j)$ and $(j,i)$ for $i\ne j$ gives \eqref{eq:Cij} and hence \eqref{eq:dilationidentity}.
\end{proof}

For later use set, for $i<j$,
\begin{equation}\label{eq:Vij}
V_{ij}=F_je_i-F_ie_j,
\end{equation}
and extend the notation by $V_{ji}=-V_{ij}$ when $j>i$. We also write $C_{ji}=C_{ij}$.

\begin{lemma}[pair-vector identities]\label{lem:pairvectors}
Along $\Sigma$, every $V_{ij}$ is tangent and
\begin{equation}\label{eq:CijII}
C_{ij}[F]=\operatorname{Hess}F(V_{ij},V_{ij})=|\nabla F|\,II(V_{ij},V_{ij}),
\end{equation}
up to orientation. Moreover,
\begin{equation}\label{eq:traceidentity}
\sum_{i<j}V_{ij}V_{ij}^{\!\top}=|\nabla F|^2I-\nabla F\nabla F^{\!\top},
\end{equation}
and consequently
\begin{equation}\label{eq:minop-pairs}
\mathcal M[F]=\sum_{i<j}\operatorname{Hess}F(V_{ij},V_{ij}).
\end{equation}
\end{lemma}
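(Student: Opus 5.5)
The proof is a direct computation in coordinates, organized in three steps.

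\emph{Tangency and the pair form.} We have $\langle V_{ij},\nabla F\rangle = F_jF_i - F_iF_j = 0$, so each $V_{ij}$ is orthogonal to $\nabla F$. Along $\Sigma$ this means $V_{ij}\in T\Sigma$. Expanding the quadratic form gives
\[
\operatorname{Hess}F(V_{ij},V_{ij}) = F_j^2F_{ii} - 2F_iF_jF_{ij} + F_i^2F_{jj},
\]
which is exactly \eqref{eq:Cij}. Since $V_{ij}$ is tangent, \eqref{eq:IIlevel} extends by bilinearity from the orthonormal frame to arbitrary tangent vectors. This yields $\operatorname{Hess}F(V_{ij},V_{ij}) = |\nabla F|\,II(V_{ij},V_{ij})$, with the sign fixed by the orientation convention $\nu = \nabla F/|\nabla F|$. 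Finally, $C_{ji}=C_{ij}$ and $V_{ji}=-V_{ij}$ are consistent, because the form is quadratic.

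\emph{The matrix identity.} I check \eqref{eq:traceidentity} entrywise. The matrix $V_{ij}V_{ij}^{\top}$ has entry $F_j^2$ at $(i,i)$, entry $F_i^2$ at $(j,j)$, and entry $-F_iF_j$ at $(i,j)$ and $(j,i)$. All other entries are zero.
\begin{itemize}
\item[] \emph{Diagonal entry $(k,k)$.} Summing over the pairs containing $k$ gives $\sum_{m\ne k}F_m^2 = |\nabla F|^2 - F_k^2$.
\item[] \emph{Off-diagonal entry $(k,l)$.} Only the pair $\{k,l\}$ contributes, giving $-F_kF_l$.
\end{itemize}
This is exactly $|\nabla F|^2 I - \nabla F\nabla F^{\top}$.

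\emph{The formula for $\mathcal M$.} Summing the first step over pairs gives
\[
\sum_{i<j}\operatorname{Hess}F(V_{ij},V_{ij}) = \operatorname{tr}\Bigl(\operatorname{Hess}F \sum_{i<j}V_{ij}V_{ij}^{\top}\Bigr).
\]
By the matrix identity this equals
\[
|\nabla F|^2\operatorname{tr}\operatorname{Hess}F - \nabla F^{\top}\operatorname{Hess}F\,\nabla F = |\nabla F|^2\Delta F - \operatorname{Hess}F(\nabla F,\nabla F) = \mathcal M[F],
\]
which proves \eqref{eq:minop-pairs}. As a consistency check, one can also obtain this by setting $t=1$ in Proposition~\ref{prop:dilationidentity}.

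The computations are routine and there is no real obstacle. The only care needed is the orientation and sign in \eqref{eq:CijII}, and remembering that \eqref{eq:IIlevel} applies to non-unit tangent vectors by bilinearity.
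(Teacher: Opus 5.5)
Your proposal is correct and follows the paper's proof essentially step for step: orthogonality to $\nabla F$ for tangency, expansion of the Hessian quadratic form together with \eqref{eq:IIlevel} extended by bilinearity, an entrywise check of \eqref{eq:traceidentity}, and a trace against $\operatorname{Hess}F$. Your extra consistency check via $t=1$ in Proposition~\ref{prop:dilationidentity} is valid but not needed.
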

\begin{proof}
The tangency follows from $\langle V_{ij},\nabla F\rangle=F_jF_i-F_iF_j=0$. Expanding $\operatorname{Hess}F(V_{ij},V_{ij})$ gives \eqref{eq:Cij}, and \eqref{eq:IIlevel} gives \eqref{eq:CijII}. For \eqref{eq:traceidentity}, the diagonal $(k,k)$ entry of the left-hand side is $\sum_{j\ne k}F_j^2=|\nabla F|^2-F_k^2$, while the $(k,\ell)$ entry, $k\ne\ell$, is $-F_kF_\ell$. Taking the trace after multiplication by $\operatorname{Hess}F$ gives \eqref{eq:minop-pairs}.
\end{proof}

The vanishing of $C_{ij}$ does not depend on the chosen regular defining function. If $\widetilde F=\psi F$ with $\psi\ne0$ on $\Sigma$, then along $\Sigma$ one has $\widetilde F_i=\psi F_i$ and $\widetilde F_{ij}=\psi F_{ij}+\psi_iF_j+\psi_jF_i$. Direct substitution gives
\begin{equation}\label{eq:gauge}
\mathcal M[\widetilde F]=\psi^3\mathcal M[F],\qquad C_{ij}[\widetilde F]=\psi^3C_{ij}[F].
\end{equation}
Thus the equations $C_{ij}=0$ are geometric along the hypersurface.

\begin{definition}\label{def:nonresonance}
The weights $g_1,\ldots,g_n$ are \emph{pair-sum nonresonant} if
\begin{equation}\label{eq:nonresonance}
g_i+g_j=g_k+g_\ell\quad\Longrightarrow\quad\{i,j\}=\{k,\ell\}
\end{equation}
for unordered pairs $i<j$ and $k<\ell$. Equivalently, the $\binom n2$ pair sums are all distinct.
\end{definition}
For $n\ge3$, pair-sum nonresonance automatically implies that the weights themselves are pairwise distinct: if $g_a=g_b$ with $a\ne b$, then for any third index $c$ one has $g_a+g_c=g_b+g_c$. This is a sufficient arithmetic condition for coefficient separation; it is not asserted to be necessary for rigidity.

\begin{lemma}[finite coefficient separation]\label{lem:separation}
Assume pair-sum nonresonance and put $N=\binom n2$. If $D_{t_1}\Sigma,\ldots,D_{t_N}\Sigma$ are minimal for $N$ distinct positive numbers $t_1,\ldots,t_N$, then $C_{ij}[F]=0$ on $\Sigma$ for every $i<j$.
\end{lemma}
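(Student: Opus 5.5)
The plan is to fix a point $x\in\Sigma$ and regard the coefficients $C_{ij}[F](x)$, $i<j$, as $N$ unknown real numbers. Minimality of $D_{t_k}\Sigma$ lets us apply Proposition~\ref{prop:minop} to the regular defining function $F_{t_k}=F\circ D_{t_k}^{-1}$ of $D_{t_k}\Sigma$ near $D_{t_k}x$. Regularity is preserved, since $\nabla F_{t_k}(D_{t_k}x)$ has components $t_k^{-g_i}F_i(x)$ by \eqref{eq:firsttrans} and so is nonzero. This gives $\mathcal M[F_{t_k}](D_{t_k}x)=0$. By the diagonal-dilation identity \eqref{eq:dilationidentity}, we obtain the $N$ linear equations
\begin{equation*}
\sum_{i<j} t_k^{-2(g_i+g_j)}\,C_{ij}[F](x)=0,\qquad k=1,\ldots,N.
\end{equation*}

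Next, enumerate the unordered pairs and set $\lambda_{ij}=-2(g_i+g_j)$. By pair-sum nonresonance \eqref{eq:nonresonance}, these $N$ exponents are pairwise distinct real numbers. The coefficient matrix is the generalized Vandermonde matrix $A=(t_k^{\lambda_{ij}})$, where $k$ indexes rows and $\{i,j\}$ indexes columns, with distinct positive nodes $t_k$ and distinct real exponents. It suffices to show that $A$ is nonsingular: then the only solution is $C_{ij}[F](x)=0$ for all $i<j$, and since $x$ was arbitrary the lemma follows.

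The only substantive step is the nonsingularity of $A$, i.e.\ that $\{t^{\lambda}\}$ with distinct real $\lambda$ is a Chebyshev system on $(0,\infty)$. I would prove this via the substitution $t=e^s$: a nonzero kernel vector $c$ of $A$ gives a generalized exponential polynomial $p(s)=\sum_m c_m e^{\lambda_m s}$ that vanishes at $N$ distinct points $s_k=\log t_k$. The claim is that a nontrivial sum of $m$ real exponentials has at most $m-1$ real zeros, which I would establish by induction on $m$ (Pólya's argument). Multiply by $e^{-\lambda_1 s}$, which does not change the zeros. Then differentiate: by Rolle's theorem the number of zeros drops by at most one, and the derivative is a sum of $m-1$ exponentials with distinct exponents $\lambda_m-\lambda_1$ and nonzero coefficients whenever the original coefficients were. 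The base case $m=1$ is immediate, since $c\,e^{\lambda s}$ has no zeros when $c\ne0$. Since $p$ has $N$ zeros, $p$ must be identically zero, so $c=0$ and $A$ is invertible.

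Handling zero coefficients in the induction requires only discarding them beforehand. Note also that the $t_k$ need not be ordered, and that positivity of the weights is not even needed here, only distinctness of the pair sums.
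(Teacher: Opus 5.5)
Your proposal is correct and follows the paper's proof essentially verbatim: fix $x$, use the diagonal-dilation identity to obtain an exponential sum in $s=\log t$ with the $N$ distinct exponents $2(g_i+g_j)$, observe that it vanishes at the $N$ points $\log t_k$, and conclude by the Rolle-induction bound of at most $N-1$ real zeros. The only differences are cosmetic: you phrase the conclusion as nonsingularity of the generalized Vandermonde matrix, and you check regularity of $F_{t_k}$ inside the lemma, whereas the paper does that check in the proof of Theorem~\ref{thm:main}.
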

\begin{proof}
Fix $x\in\Sigma$ and list the distinct exponents $2(g_i+g_j)$ as $\lambda_1<\cdots<\lambda_N$. By Proposition~\ref{prop:dilationidentity},
\begin{equation}\label{eq:expsum}
P_x(t)=\sum_{r=1}^N a_r(x)t^{-\lambda_r}
\end{equation}
vanishes at $t_1,\ldots,t_N$. With $s=\log t$, write $Q_x(s)=\sum_{r=1}^Na_r(x)e^{-\lambda_rs}$. A nonzero exponential sum with $N$ distinct real exponents has at most $N-1$ distinct real zeros. This follows by induction: multiply by $e^{\lambda_1s}$, differentiate, and apply Rolle's theorem to reduce the number of terms by one. Since $Q_x$ has the $N$ distinct zeros $\log t_r$, it is identically zero. Hence every $a_r(x)$ vanishes, and nonresonance identifies these coefficients with the individual $C_{ij}[F](x)$.
\end{proof}

\begin{corollary}\label{cor:accumulation}
Under pair-sum nonresonance, the conclusion of Lemma~\ref{lem:separation} holds if $D_t\Sigma$ is minimal for every $t$ in any subset of $(0,\infty)$ having an accumulation point; in particular, it holds if $D_t\Sigma$ is minimal for every $t>0$.
\end{corollary}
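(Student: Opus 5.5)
We prove the corollary pointwise, by the same reduction as in the proof of Lemma~\ref{lem:separation}, with Rolle-type counting replaced by the identity theorem for real-analytic functions. Let $T\subset(0,\infty)$ be a set with an accumulation point $t_*$, and suppose $D_t\Sigma$ is minimal for every $t\in T$.

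First suppose $t_*\in(0,\infty)$. Then $T$ contains infinitely many distinct points, so in particular at least $N=\binom n2$ distinct ones. Lemma~\ref{lem:separation} applies directly and gives $C_{ij}[F]=0$ on $\Sigma$. No further work is needed in this case.

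Now suppose $t_*\in\{0,\infty\}$, so that the accumulation point is taken in $[0,\infty]$. We still only need infinitely many distinct parameters, and any set with an accumulation point (even one at $0$ or $\infty$) is infinite. Choose $N$ distinct elements $t_1,\ldots,t_N\in T$ and invoke Lemma~\ref{lem:separation} again.

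For a more intrinsic argument that does not use the counting bound, fix $x\in\Sigma$ and consider $Q_x(s)=\sum_r a_r(x)e^{-\lambda_r s}$, as in \eqref{eq:expsum}. By Proposition~\ref{prop:minop} and Proposition~\ref{prop:dilationidentity}, $Q_x$ vanishes on $\log T$.

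If $t_*$ is finite and positive, $Q_x$ is real-analytic and its zeros accumulate at $\log t_*$, so $Q_x\equiv0$. The functions $e^{-\lambda_r s}$ with distinct $\lambda_r$ are linearly independent; one sees this by letting $s\to-\infty$ and peeling off the dominant exponential. Hence every $a_r(x)=0$.

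If instead $t_*=\infty$ (respectively $t_*=0$), let $\lambda_r$ be the smallest (respectively largest) exponent with $a_r(x)\neq0$. Then $e^{\lambda_r s}Q_x(s)\to a_r(x)\ne0$ as $s\to+\infty$ (respectively $s\to-\infty$). This contradicts the existence of zeros of $Q_x$ tending to that end.

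In every case all $a_r(x)$ vanish, and pair-sum nonresonance identifies each $a_r$ with a single $C_{ij}[F]$, exactly as in Lemma~\ref{lem:separation}. The particular case $T=(0,\infty)$ is immediate, since every point of $(0,\infty)$ is an accumulation point. The only mild subtlety is interpreting "accumulation point" at the ends $0$ and $\infty$, and that is handled by the finite-cardinality reduction above.
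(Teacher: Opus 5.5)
Your proof is correct and follows the paper's approach: the paper states the corollary without a separate proof, as an immediate consequence of Lemma~\ref{lem:separation}, since any set with an accumulation point (finite or at an endpoint) is infinite and so contains $N=\binom n2$ distinct parameters. Your second argument, using the identity theorem and the dominant-exponential limits as $s\to\pm\infty$, is also valid but redundant, because the counting reduction already shows that any $N$ distinct parameters suffice.
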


We now remove the coordinate-normal hypothesis that would arise from trying to recover $II$ in a single fixed coordinate chart.

\begin{lemma}[coefficient vanishing forces flatness]\label{lem:allcoeffflat}
Let $n\ge2$ and let $\Sigma\subset\mathbb R^n$ be an embedded $C^2$ hypersurface. If, near each point of $\Sigma$, the conditions $C_{ij}[F]=0$ hold for one local regular defining function $F$ (equivalently, by \eqref{eq:gauge}, for every such defining function), for all $i<j$, then $II_\Sigma\equiv0$.
\end{lemma}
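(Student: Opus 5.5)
The strategy is to reduce the statement to pointwise linear algebra on an open dense subset of $\Sigma$, and then extend by continuity. The point of the dense subset is that $II$ cannot be recovered pointwise from the $C_{ij}$ alone. For example, if $\nu=e_n$ at a point, then $V_{ij}=0$ for $i,j<n$ and $V_{in}=F_ne_i$. The hypotheses then only kill the diagonal entries $II(e_i,e_i)$, and the off-diagonal entries are unconstrained. So the proof has to use how the vanishing pattern of the normal components varies along $\Sigma$; this is the dimension-reduction step.

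\textbf{Step 1 (pointwise algebra when the relevant normal components are nonzero).} Write $\nu=(\nu_1,\ldots,\nu_n)$. Up to the positive factor $|\nabla F|$ we have $V_{ij}\propto \nu_je_i-\nu_ie_j$. For distinct $i,j,k$ these satisfy the cyclic relation
\[
\nu_kV_{ij}+\nu_iV_{jk}+\nu_jV_{ki}=0 .
\]
Let $B=II$ at the point, so $B(V_{ab},V_{ab})=0$ for all pairs by \eqref{eq:CijII}. Substitute $\nu_kV_{ij}=-(\nu_iV_{jk}+\nu_jV_{ki})$ into $B(\nu_kV_{ij},\nu_kV_{ij})=0$ and expand. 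The two diagonal terms vanish, leaving $\nu_i\nu_jB(V_{jk},V_{ki})=0$. Hence $B(V_{jk},V_{ki})=0$ whenever $\nu_i\nu_j\neq0$.

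Now suppose $A\subset\{1,\ldots,n\}$ is a set of indices with $\nu_a\ne0$ for all $a\in A$, and fix $a_0\in A$. The vectors $V_{a_0b}$, $b\in A\setminus\{a_0\}$, are linearly independent, and they span $\nu^\perp\cap\operatorname{span}\{e_a:a\in A\}$. By the previous paragraph, $B$ vanishes on all pairs from this family. Therefore $B$ vanishes identically on that subspace.

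\textbf{Step 2 (stratification by vanishing components).} For $x\in\Sigma$ set $S(x)=\{k:\nu_k(x)=0\}$. By continuity of $\nu$, we have $S(y)\subset S(x)$ for $y$ near $x$, so $x\mapsto|S(x)|$ is upper semicontinuous and takes finitely many values. Hence the set $\Sigma_0$ where $|S|$ is locally constant is open and dense. On $\Sigma_0$, the inclusion $S(y)\subset S(x)$ together with equal cardinality shows that $S$ itself is locally constant.

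Work on a connected open piece $W\subset\Sigma_0$ with $S(y)\equiv S$. Let $A$ be the complement of $S$; it is nonempty because $|\nu|=1$. For $k\in S$, the identity $\nu_k\equiv0$ on $W$ gives $d\nu_k=0$, that is, $\langle d\nu(X),e_k\rangle=0$ for all tangent $X$. Note also that $e_k$ is tangent to $\Sigma$ on $W$, since $\langle e_k,\nu\rangle=\nu_k=0$. By symmetry of the shape operator this means $II(e_k,X)=0$ for every tangent $X$. (Geometrically, $W$ is locally a cylinder over a hypersurface in the coordinates indexed by $A$, which is the dimension reduction.) The tangent space on $W$ splits as $\operatorname{span}\{e_k:k\in S\}\oplus(\nu^\perp\cap\operatorname{span}\{e_a:a\in A\})$. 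The first summand is killed by what we just showed, and the second by Step 1, since $\nu_a\neq0$ for all $a\in A$. This includes the edge cases: if $|A|=1$ the second summand is zero, and if $|A|=2$ it is spanned by a single $V_{ab}$ with $B(V_{ab},V_{ab})=0$. So $II=0$ on $W$, and hence on all of $\Sigma_0$.

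\textbf{Step 3.} Since $II$ is continuous (because $\Sigma$ is $C^2$) and $\Sigma_0$ is dense, $II\equiv0$ on $\Sigma$. Gauge independence from \eqref{eq:gauge} guarantees that the hypothesis is insensitive to the choice of $F$.

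The main obstacle I expect is Step 2, specifically justifying that $II(e_k,\cdot)=0$ when $\nu_k$ vanishes on an open set rather than just at a point. This is exactly where pointwise data are insufficient, as the $\nu=e_n$ example shows, so handling it correctly on the open dense stratum is the essential part of the argument.
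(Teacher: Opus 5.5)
Your proof is correct, and it takes a different route from the paper's.

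\textbf{The paper's route.} It argues by induction on $n$, in four steps.
\begin{enumerate}
\item It shows $II=0$ on the open set $\Omega$ where all $\nu_i\ne0$. This is your Step~1 with $A=\{1,\ldots,n\}$: the paper's relation $F_mV_{ij}=F_jW_i-F_iW_j$, with $W_i=V_{im}$, is your cyclic identity with $k=m$.
\item It applies Baire category to the closed sets $Z_i=\{\nu_i=0\}$ covering $\Sigma\setminus\overline\Omega$.
\item It uses the flow of the tangent constant field $e_i$ to write an open piece as a cylinder $M\times I$.
\item It invokes the induction hypothesis on the base $M\subset\mathbb R^{n-1}$. The base case $n=2$ rests on the fact that minimal curves are lines.
\end{enumerate}

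\textbf{Your route.} You stratify by the full vanishing set $S(x)$ and use upper semicontinuity of $|S|$ to obtain an open dense set on which $S$ is locally constant. You then kill the cylindrical directions infinitesimally, via $II(X,e_k)=-\langle d\nu(X),e_k\rangle=-X(\nu_k)=0$ up to orientation. Finally, you handle the complementary summand $\nu^\perp\cap\operatorname{span}\{e_a:a\in A\}$ by the same pointwise algebra restricted to the index set $A$.

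\textbf{What each approach buys.} Your version needs no induction, no base case, no integral-curve or product construction, and no Baire category. The last is replaced by an elementary fact: on any nonempty open set, the minimum of $|S|$ is attained on an open subset. Every stratum is treated uniformly in one step. The paper's version makes the geometric picture explicit: a piece with $\nu_i\equiv0$ is literally a cylinder over a lower-dimensional hypersurface that satisfies the same hypotheses. Your Weingarten identity encodes exactly this fact infinitesimally.
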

\begin{proof}
We argue by induction on $n$. For $n=2$, \eqref{eq:minop-pairs} reduces to $C_{12}=\mathcal M[F]$. Proposition~\ref{prop:minop} says that $\Sigma$ is a minimal curve in $\mathbb R^2$, hence locally a straight line and $II=0$.

Assume the result in dimension $n-1$ and let $n\ge3$. Put
\[
A=\{x\in\Sigma:II_x=0\}.
\]
This set is closed. Let
\[
\Omega=\{x\in\Sigma:\nu_i(x)\ne0\text{ for all }i\}.
\]
At a point of $\Omega$, choose an index $m$ with $F_m\ne0$ and define $W_i=F_me_i-F_ie_m$ for $i\ne m$. These vectors are tangent. They are linearly independent because $\sum_{i\ne m}a_iW_i=0$ forces $a_iF_m=0$ from the $e_i$-component for every $i\ne m$; hence they form a basis of $T_x\Sigma$. Equation \eqref{eq:CijII} gives $II(W_i,W_i)=0$. For $i,j\ne m$ we use the antisymmetric convention following \eqref{eq:Vij}, so the formula below is valid regardless of the ordering of $m,i,j$:
\[
F_mV_{ij}=F_jW_i-F_iW_j.
\]
Since $C_{ij}=0$, bilinearity and symmetry yield
\[
0=F_m^2II(V_{ij},V_{ij})=-2F_iF_jII(W_i,W_j).
\]
On $\Omega$ all factors are nonzero, so $II_x=0$. Hence $\Omega\subset A$.

It remains to treat the part not approximated by $\Omega$. Set $W=\Sigma\setminus\overline\Omega$. For each coordinate define
\[
Z_i=\{x\in W:\nu_i(x)=0\}.
\]
The closed sets $Z_i$ cover $W$. Since $W$ is an open subset of a finite-dimensional manifold, it is a Baire space; therefore the union of the relative interiors $\operatorname{int}_W Z_i$ is dense in $W$. Let $V$ be a connected open subset of some $\operatorname{int}_W Z_i$. On $V$, the constant vector $e_i$ is tangent everywhere. Restrict the constant ambient vector field $e_i$ to $V$. Its integral curves are the straight lines $x+s e_i$, and uniqueness for the induced local flow on the embedded hypersurface shows that sufficiently short pieces of these lines remain in $V$. After shrinking $V$ if necessary, $V$ is therefore a product $M\times I$ in the $e_i$-direction, where $M$ is a $C^2$ hypersurface in the coordinate hyperplane obtained by deleting the $i$-th coordinate and $I$ is an interval.

On such a product neighborhood one may choose a defining function $F(x)=G(\widehat x_i)$ independent of $x_i$. For pairs $j<k$ with $j,k\ne i$, the coefficients $C_{jk}[F]$ are exactly the coefficients $C_{jk}[G]$ of the base hypersurface $M$. They vanish by hypothesis. The induction assumption gives $II_M=0$. The cylindrical direction has zero second fundamental form and no mixed curvature, hence $II_V=0$. Thus $A$ contains $\Omega$ and a dense subset of $W$. Since $\Omega\cup W$ is dense in $\Sigma$ and $A$ is closed, $A=\Sigma$.
\end{proof}

\begin{remark}\label{rem:graph}
At a point where $F_n\ne0$, a local graph $x_n=h(x_1,\ldots,x_{n-1})$ gives $F=x_n-h$. The equations involving the $n$-th coordinate first control the pure second derivatives of $h$, while the remaining pair equations control the mixed terms wherever the corresponding first derivatives do not vanish. Lemma~\ref{lem:allcoeffflat} is the coordinate-free completion of this observation across the possible zero sets of those first derivatives.
\end{remark}

\section{Rigidity, resonance, and the production-function application}\label{sec:main}

The two steps from Section~\ref{sec:geometry} now fit together cleanly: nonresonance separates the pair coefficients, while Lemma~\ref{lem:allcoeffflat} turns their vanishing into flatness without any assumption on the normal coordinates. We then test the arithmetic hypothesis against explicit resonant examples and apply the result to weighted-homogeneous isoquants.

\begin{theorem}[finite-dilation rigidity and affine characterization]\label{thm:main}
Let $n\ge3$, let $\Sigma\subset\mathbb R^n$ be a connected embedded $C^2$ hypersurface, and let $D_t$ have positive pair-sum-nonresonant weights. Put $N=\binom n2$. The following are equivalent:
\begin{enumerate}
\item $D_t\Sigma$ is minimal for every $t>0$;
\item $D_{t_1}\Sigma,\ldots,D_{t_N}\Sigma$ are minimal for some $N$ distinct positive numbers $t_1,\ldots,t_N$;
\item $\Sigma$ is a connected open subset of an affine hyperplane.
\end{enumerate}
\end{theorem}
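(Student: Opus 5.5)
I will prove the cycle of implications (1)$\Rightarrow$(2)$\Rightarrow$(3)$\Rightarrow$(1). The step (1)$\Rightarrow$(2) is trivial: choose any $N$ distinct positive parameters. All the geometric content sits in (2)$\Rightarrow$(3), which I will assemble from Lemma~\ref{lem:separation} and Lemma~\ref{lem:allcoeffflat}. The converse (3)$\Rightarrow$(1) is the observation that invertible linear maps send hyperplanes to hyperplanes.

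For (2)$\Rightarrow$(3), I cover $\Sigma$ by neighborhoods carrying regular defining functions $F$. In each such chart I apply Lemma~\ref{lem:separation}, which gives $C_{ij}[F]=0$ along $\Sigma\cap U$ for every $i<j$. I only need to check that minimality of $D_{t_r}\Sigma$ is witnessed by $\mathcal M[F_{t_r}]=0$ on $D_{t_r}(\Sigma\cap U)$. This follows from Proposition~\ref{prop:minop}, because $F_{t_r}=F\circ D_{t_r}^{-1}$ is a regular defining function of $D_{t_r}\Sigma$ near $D_{t_r}x$; its gradient is nonzero by \eqref{eq:firsttrans}. By \eqref{eq:gauge} the vanishing of the $C_{ij}$ does not depend on the choice of defining function. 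Lemma~\ref{lem:allcoeffflat} then gives $II_\Sigma\equiv0$.

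It remains to pass from $II\equiv0$ on a connected embedded $C^2$ hypersurface to containment in an affine hyperplane, which I expect to be the only point needing care. The Weingarten relation $d\nu(X)=-S X$ holds for $C^2$ hypersurfaces with $\nu$ of class $C^1$. So $II\equiv0$ gives $d\nu=0$, and $\nu$ is locally constant up to sign. Hence each local choice of unit normal is constant, and connectedness of $\Sigma$ makes the line field $\mathbb R\nu$ globally constant, say $\mathbb R\nu_0$. Then the function $x\mapsto\langle x,\nu_0\rangle$ has zero tangential derivative along $\Sigma$, so it is constant on the connected set $\Sigma$. Thus $\Sigma$ lies in an affine hyperplane $P$. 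Since $\Sigma$ is an $(n-1)$-dimensional embedded submanifold contained in $P$, invariance of domain shows that $\Sigma$ is open in $P$, which gives (3).

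For (3)$\Rightarrow$(1), write $\Sigma\subset\{\langle a,x\rangle=b\}$. Then $D_t\Sigma$ is an open subset of $\{\langle D_t^{-1}a,y\rangle=b\}$, which is again an affine hyperplane. Affine hyperplanes are minimal; equivalently, the affine defining function $F_t$ has vanishing Hessian, so $\mathcal M[F_t]=0$. This completes the cycle.
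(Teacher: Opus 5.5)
Your proposal is correct and follows essentially the same route as the paper. The paper's proof also checks that $F_t=F\circ D_t^{-1}$ is a regular defining function, then applies Lemma~\ref{lem:separation} and Lemma~\ref{lem:allcoeffflat} to get $II\equiv0$, passes to an affine hyperplane via Weingarten, connectedness and invariance of domain, and obtains (3)$\Rightarrow$(1) from the fact that diagonal linear images of hyperplanes are hyperplanes.
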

\begin{proof}
The implication (1)$\Rightarrow$(2) is immediate. Assume (2). If $F$ is a local regular defining function on $U$, then $F_t=F\circ D_t^{-1}$ is defined on $D_tU$ and
\[
\nabla F_t(D_tx)=D_t^{-1}\nabla F(x)\ne0.
\]
Thus Proposition~\ref{prop:dilationidentity} applies at each chosen parameter. Lemma~\ref{lem:separation} gives $C_{ij}=0$ for every pair, and Lemma~\ref{lem:allcoeffflat} yields $II\equiv0$.

The Weingarten equation implies that the normal line is locally constant. Connectedness then makes this normal line globally constant. Choosing a fixed unit vector $a$ spanning it, the function $x\mapsto a\cdot x$ has zero differential along $\Sigma$ and is therefore constant on the connected hypersurface. Hence $\Sigma$ is contained in an affine hyperplane $P$. Since $\Sigma$ is embedded, the inclusion $\Sigma\hookrightarrow P$ is a continuous injective map between $(n-1)$-manifolds. Invariance of domain shows that its image is relatively open in $P$. Thus (3) holds.

Conversely, a diagonal linear image of an affine hyperplane is again an affine hyperplane. Therefore every $D_t\Sigma$ is minimal, proving (3)$\Rightarrow$(1).
\end{proof}

\begin{example}[affine hyperplanes]\label{ex:affine}
If $P=\{x:a\cdot x=b\}$, then
\[
D_tP=\left\{y:\sum_i a_it^{-g_i}y_i=b\right\}.
\]
Thus every diagonal image of every affine hyperplane is again affine and minimal, including coordinate hyperplanes. This is the converse side of Theorem~\ref{thm:main}.
\end{example}

\begin{remark}\label{rem:global}
No completeness or properness assumption is used. The conclusion is an open connected piece of an affine hyperplane; equality with the whole hyperplane requires additional global hypotheses.
\end{remark}

\begin{remark}\label{rem:isotropic}
If $g_1=\cdots=g_n$, all pair sums coincide and nonresonance fails maximally. In that case $D_t$ is a Euclidean similarity and every minimal hypersurface remains minimal. Thus the nonresonant theorem is consistent with the familiar isotropic situation.
\end{remark}

\begin{proposition}\label{prop:3weights}
For $n=3$, pair-sum nonresonance is equivalent to pairwise distinct weights.
\end{proposition}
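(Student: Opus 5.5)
For $n=3$ there are exactly three unordered pairs, namely $\{1,2\}$, $\{1,3\}$ and $\{2,3\}$, and any two distinct pairs share exactly one index. The plan is to exploit this by proving both implications directly from Definition~\ref{def:nonresonance}.

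\emph{Nonresonance implies distinct weights.} This is the general remark following Definition~\ref{def:nonresonance}, specialized to $n=3$. If $g_a=g_b$ with $a\ne b$, let $c$ be the remaining index. Then $g_a+g_c=g_b+g_c$, while $\{a,c\}\ne\{b,c\}$, which violates \eqref{eq:nonresonance}.

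\emph{Distinct weights imply nonresonance.} Suppose $g_1,g_2,g_3$ are pairwise distinct, and suppose two distinct unordered pairs have equal sums. Because $n=3$, these pairs must have the form $\{a,c\}$ and $\{b,c\}$ with $a\ne b$, sharing the index $c$. Cancelling $g_c$ from $g_a+g_c=g_b+g_c$ gives $g_a=g_b$, a contradiction. Hence all three pair sums are distinct.

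There is no real obstacle here. The only point to state explicitly is the combinatorial fact that two distinct $2$-subsets of $\{1,2,3\}$ always intersect in exactly one element. This is exactly what fails for $n\ge4$, where disjoint pairs such as $\{1,2\}$ and $\{3,4\}$ can create resonances like $g_1+g_2=g_3+g_4$ even when the weights are pairwise distinct.
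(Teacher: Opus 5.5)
Your proof is correct and is the paper's argument: the paper likewise notes that an equality such as $g_1+g_2=g_1+g_3$ holds if and only if $g_2=g_3$, with the other cases identical after relabeling. You simply state both directions separately and make explicit the underlying fact that any two distinct pairs in $\{1,2,3\}$ share exactly one index.
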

\begin{proof}
For example, $g_1+g_2=g_1+g_3$ if and only if $g_2=g_3$, and the other possibilities are identical after relabeling.
\end{proof}

\begin{corollary}[three dilations suffice in $\mathbb R^3$]\label{cor:3d}
Let $\Sigma\subset\mathbb R^3$ be a connected embedded $C^2$ surface and let $g_1,g_2,g_3$ be pairwise distinct positive weights. If $D_{t_1}\Sigma$, $D_{t_2}\Sigma$, and $D_{t_3}\Sigma$ are minimal for three distinct positive values $t_1,t_2,t_3$, then $\Sigma$ is an open subset of an affine plane.
\end{corollary}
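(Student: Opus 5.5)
The plan is to obtain Corollary~\ref{cor:3d} as the special case $n=3$ of Theorem~\ref{thm:main}. First I will check the hypotheses of that theorem. Here $n=3\ge3$, $\Sigma$ is a connected embedded $C^2$ surface, and the weights $g_1,g_2,g_3$ are positive. By Proposition~\ref{prop:3weights}, pairwise distinct weights are exactly pair-sum nonresonant when $n=3$. Concretely, the three pair sums $g_1+g_2$, $g_1+g_3$, $g_2+g_3$ are distinct precisely when no two of the $g_i$ coincide, because each equality between two pair sums cancels a common index. Hence the family $D_t$ in the corollary satisfies the standing arithmetic hypothesis of Theorem~\ref{thm:main}.

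Next I compute $N=\binom32=3$. The assumption that $D_{t_1}\Sigma$, $D_{t_2}\Sigma$, $D_{t_3}\Sigma$ are minimal for three distinct positive parameters is therefore literally condition (2) of Theorem~\ref{thm:main}. The equivalence (2)$\Rightarrow$(3) then gives that $\Sigma$ is a connected open subset of an affine plane in $\mathbb R^3$, which is the claim.

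Behind this, the mechanism is Lemma~\ref{lem:separation}, applied with the three exponents $2(g_1+g_2)$, $2(g_1+g_3)$, $2(g_2+g_3)$. It forces $C_{12}=C_{13}=C_{23}=0$ on $\Sigma$. Lemma~\ref{lem:allcoeffflat} with $n=3$ then gives $II\equiv0$, and the connectedness argument in the proof of the theorem yields the planar conclusion.

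There is no genuine obstacle here, since the corollary is a direct specialization. The only point to check carefully is that ``pairwise distinct'' really suffices. That is exactly the content of Proposition~\ref{prop:3weights}, and it is special to $n=3$: for $n\ge4$, distinct weights such as $1,2,3,4$ can still be resonant, since $1+4=2+3$.
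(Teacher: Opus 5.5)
Your proposal is correct and follows the paper's intended route. The corollary is the case $n=3$ of Theorem~\ref{thm:main}, with Proposition~\ref{prop:3weights} converting pairwise distinct weights into pair-sum nonresonance, and $N=\binom32=3$ identifying the hypothesis with condition (2).
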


For $n\ge4$, pairwise distinct weights do not imply pair-sum nonresonance; $(1,2,3,4)$ already satisfies $1+4=2+3$. The following examples show that such coincidences can correspond to genuine curvature cancellation.

\begin{proposition}[nonflat minimal geometry in resonant regimes]\label{prop:resonance}
Pair-sum resonance can preserve nonflat minimal geometry under genuinely nonuniform diagonal dilations.
\begin{enumerate}
\item In $\mathbb R^3$, let $g_1=g_2=1$, $g_3=\gamma>0$, $\gamma\ne1$, and on the quadrant $x_1,x_2>0$ set
\begin{equation}\label{eq:helicoid}
\Sigma_c=\{x_3=c\arctan(x_2/x_1)\},\qquad c\ne0.
\end{equation}
Then every $D_t\Sigma_c$ is again a helicoid patch and hence minimal, while $\Sigma_c$ is nonflat.
\item In $\mathbb R^4$, let $g=(1,2,3,4)$ and
\begin{equation}\label{eq:cone}
\Sigma=\{x_1x_4+x_2x_3=0\}.
\end{equation}
Its regular part is $\Sigma\setminus\{0\}$; it is connected, nonflat, and minimal, and it satisfies $D_t(\Sigma\setminus\{0\})=\Sigma\setminus\{0\}$ for every $t>0$.
\end{enumerate}
\end{proposition}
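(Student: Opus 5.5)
Both parts are direct verifications, so the plan is to compute explicitly rather than invoke the rigidity machinery. For part (1), I will compute $D_t\Sigma_c$ directly. A point $(x_1,x_2,c\arctan(x_2/x_1))$ is sent to $(tx_1,tx_2,t^\gamma c\arctan(x_2/x_1))$. Since $g_1=g_2$, the ratio is preserved: $y_2/y_1=x_2/x_1$. Hence
\[
D_t\Sigma_c=\{y_3=t^\gamma c\arctan(y_2/y_1)\}=\Sigma_{t^\gamma c},
\]
again on the quadrant $y_1,y_2>0$. It then suffices to check that every $\Sigma_c$ is minimal and nonflat.

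Minimality can be checked in either of two ways: recognize $\Sigma_c$ as a patch of the classical helicoid $x_3=c\theta$, or verify $\mathcal M[F]=0$ for $F=x_3-c\arctan(x_2/x_1)$. For the verification, $\nabla F=(c x_2/r^2,-c x_1/r^2,1)$ with $r^2=x_1^2+x_2^2$. The function $\arctan(x_2/x_1)$ is harmonic, so $\Delta F=0$. It remains to show that $\operatorname{Hess}F(\nabla F,\nabla F)$ vanishes. The horizontal part of $\nabla F$ is tangent to circles, and $\theta$ restricted to a circle is linear in arclength with zero second derivative along the circle, so this term vanishes. Equivalently, $F_{11}=-F_{22}$ and the quadratic form in $(x_2,-x_1)$ cancels.

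For nonflatness of $\Sigma_c$, the quickest route is to exhibit one nonzero component, for instance $C_{12}$ or $II(V_{13},V_{13})$ via Lemma~\ref{lem:pairvectors}. Alternatively, the normal is visibly nonconstant since $c\ne0$.

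For part (2), I first prove invariance. The map sends
\[
x_1x_4+x_2x_3\;\mapsto\; t^{1+4}x_1x_4+t^{2+3}x_2x_3=t^5(x_1x_4+x_2x_3),
\]
so $D_t$ preserves the zero set, and it fixes $0$. Regularity off the origin follows from $\nabla F=(x_4,x_3,x_2,x_1)$, which vanishes only at $0$.

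For minimality, $\operatorname{Hess}F$ is the constant matrix $A$ with off-diagonal antidiagonal entries $1$, so $\Delta F=\operatorname{tr}A=0$. Moreover $\operatorname{Hess}F(\nabla F,\nabla F)=\nabla F^{\top}A\nabla F$. Writing $\nabla F=Ax$, this equals $x^{\top}A^3x=x^{\top}Ax=2F=0$ on $\Sigma$, using $A^2=I$. Hence $\mathcal M[F]=0$, and Proposition~\ref{prop:minop} gives minimality.

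Nonflatness follows because the normal $Ax/|x|$ is not constant; alternatively, a quadratic cone that is not a union of hyperplanes cannot be flat. The form has signature $(2,2)$ and is irreducible, being a nondegenerate quadric of rank $4$.

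The main obstacle I expect is connectedness of $\Sigma\setminus\{0\}$. I would diagonalize the form as $u_1^2+u_2^2=v_1^2+v_2^2$. The set $\Sigma\setminus\{0\}$ is then the set of points with $|u|=|v|=\rho>0$, which is diffeomorphic to $(0,\infty)\times S^1\times S^1$ and therefore connected.
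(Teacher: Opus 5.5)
Your proposal is correct and essentially follows the paper's proof. For (2), the steps are the same: invariance via $F(D_tx)=t^5F(x)$, the identity $\operatorname{Hess}F(\nabla F,\nabla F)=2F$ (your $A^2=I$ computation is a tidy way to get it), and connectedness via the $(2,2)$ signature and the torus link. For (1), you check $\mathcal M[F]=0$ directly on every $\Sigma_{c'}$ after noting $D_t\Sigma_c=\Sigma_{t^\gamma c}$, whereas the paper feeds the resonance-grouped sums $C_{12}=0$ and $C_{13}+C_{23}=0$ into the dilation identity; this is only a cosmetic difference.

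One slip: $C_{12}$ cannot witness nonflatness, because it vanishes identically. Here $V_{12}$ is the horizontal radial ruling direction of the helicoid, and the paper records $C_{12}=0$. The valid witness is $C_{13}=F_{11}=-2cx_1x_2/r^4\neq0$ on the open quadrant (your alternative $II(V_{13},V_{13})$), or the nonconstant normal.
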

\begin{proof}
For (1), $D_t\Sigma_c$ is given by
\[
y_3=t^\gamma c\arctan(y_2/y_1),
\]
so it is again a helicoid patch. With $F=x_3-c\arctan(x_2/x_1)$ and $r^2=x_1^2+x_2^2$,
\[
C_{12}=0,\qquad C_{13}=-\frac{2cx_1x_2}{r^4},\qquad C_{23}=\frac{2cx_1x_2}{r^4}.
\]
Here the pair sums consist of the singleton value $g_1+g_2=2$ and the repeated value $g_1+g_3=g_2+g_3=1+\gamma$; these two values are distinct because $\gamma\ne1$. Thus the resonance classes are $\{\{1,2\}\}$ and $\{\{1,3\},\{2,3\}\}$, and the corresponding coefficient sums $C_{12}$ and $C_{13}+C_{23}$ both vanish. Equation \eqref{eq:dilationidentity} therefore gives minimality of every $D_t\Sigma_c$. Moreover, on the quadrant $x_1,x_2>0$ one has $C_{13}\ne0$; by \eqref{eq:CijII}, $II$ cannot vanish identically. Hence the helicoid patch is genuinely nonflat.

For (2), put $F=x_1x_4+x_2x_3$. Since $\nabla F=(x_4,x_3,x_2,x_1)$, the only singular point of the cone is the origin, so its regular part is $\Sigma\setminus\{0\}$. After an orthogonal change of coordinates the quadratic form has signature $(2,2)$; hence its link with the unit sphere is a Clifford torus $S^1\times S^1$, and $\Sigma\setminus\{0\}\cong(S^1\times S^1)\times(0,\infty)$ is connected. Since $F(D_tx)=t^5F(x)$, the regular part is $D_t$-invariant. Moreover $\Delta F=0$ and
\begin{equation}\label{eq:cone-hess}
\operatorname{Hess}F(\nabla F,\nabla F)=2x_1x_4+2x_2x_3=2F.
\end{equation}
Hence $\mathcal M[F]=0$ on $F=0$. Also
\[
C_{14}=-2x_1x_4,\qquad C_{23}=-2x_2x_3,
\]
while the other pair coefficients vanish, so $C_{14}+C_{23}=-2F=0$ on $\Sigma$. This realizes the resonance $g_1+g_4=g_2+g_3$. On the regular subset where $x_1x_4\ne0$, one has $C_{14}\ne0$, so \eqref{eq:CijII} again implies $II\ne0$ there; the cone is nonflat. Quadratic-cone minimal hypersurfaces also occur in the separable classification of Luo and Wang \cite{LuoWang2025}; the point here is specifically the compatibility of this cone with the resonant diagonal action.
\end{proof}

\begin{proposition}[repeated weights destroy rigidity in every dimension]\label{prop:repeatedweights}
Let $n\ge3$. If two dilation weights coincide, then there exists a connected real-analytic embedded nonflat minimal hypersurface patch whose image under every $D_t$ is again minimal. Thus, apart from pair-sum nonresonance, the example satisfies the geometric hypotheses appearing in Theorem~\ref{thm:main}; repeated weights are a structural resonant obstruction to its rigidity conclusion.
\end{proposition}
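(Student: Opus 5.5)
Suppose, after relabeling coordinates, that $g_1=g_2=1$; the common value may be normalized to $1$, because the relabeling is a Euclidean isometry and preserves minimality. I will build a cylinder over the helicoid of Proposition~\ref{prop:resonance}(1). On the open set where $x_1,x_2>0$ and $x_3$ is free, put
\[
\Sigma_c=\{x\in\mathbb R^n: x_3=c\arctan(x_2/x_1)\},\qquad c\ne0,
\]
with $x_4,\ldots,x_n$ free. This is a graph over an open convex set in the coordinates $(x_1,x_2,x_4,\ldots,x_n)$. Hence it is a connected, embedded, real-analytic hypersurface. Its defining function is $F=x_3-c\arctan(x_2/x_1)$, and $\nabla F=(F_1,F_2,1,0,\ldots,0)\ne0$.

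Next I compute the pair coefficients from \eqref{eq:Cij}. Any pair containing an index $k\ge4$ has $F_k=0$ and $F_{kk}=F_{kj}=0$, so $C_{ij}=0$ for it. The pairs inside $\{1,2,3\}$ are exactly the helicoid computation: $C_{12}=0$ and $C_{13}=-C_{23}=-2cx_1x_2/r^4$. The identity $C_{12}=0$ is the harmonicity of $\arctan(x_2/x_1)$ combined with $F_3$ being constant. I will check it quickly, since $C_{12}=F_2^2F_{11}+F_1^2F_{22}-2F_1F_2F_{12}$ involves first derivatives as well. Putting these into Proposition~\ref{prop:dilationidentity} gives
\[
\mathcal M[F_t](D_tx)=t^{-2(1+g_3)}\bigl(C_{13}+C_{23}\bigr)(x)=0,
\]
because $g_1+g_3=g_2+g_3$. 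By Proposition~\ref{prop:minop}, every $D_t\Sigma_c$ is minimal, and the case $t=1$ shows that $\Sigma_c$ itself is minimal. Alternatively, $D_t\Sigma_c=\{y_3=t^{g_3}c\arctan(y_2/y_1)\}$ is again a cylinder over a helicoid patch, which gives the same conclusion directly.

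To see that $\Sigma_c$ is nonflat, note that $C_{13}\ne0$ on the open quadrant. By \eqref{eq:CijII}, $II(V_{13},V_{13})\ne0$ there, so $II\not\equiv0$.

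The construction is elementary. The only delicate step is checking $C_{12}=0$ honestly, which amounts to the minimality of the planar helicoid. The observation that removes all dependence on the remaining weights $g_3,\ldots,g_n$ is that the extra coordinates enter only as flat cylinder directions. These weights may be arbitrary, so the family can be genuinely nonuniform, for example by taking $g_3\ne1$.
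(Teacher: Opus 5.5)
Your proof is correct and is essentially the paper's. For $n\ge4$ your cylinder $x_3=c\arctan(x_2/x_1)$ is, up to which coordinate carries the height, the unsheared case $a_3=\cdots=a_{n-1}=0$ of the paper's graph $x_n=a_3x_3+\cdots+a_{n-1}x_{n-1}+c\arctan(x_2/x_1)$, and for $n=3$ it is literally the helicoid. You verify it in the same two ways as the paper: through the resonant cancellation $C_{13}+C_{23}=0$ in the dilation identity, and through the explicit image graph using $y_2/y_1=x_2/x_1$. Nonflatness likewise comes from $C_{13}\ne0$.

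There are two harmless slips. Normalizing $g_1=g_2=1$ is a reparametrization $t\mapsto t^{g_1}$ of the family, not a consequence of relabeling being an isometry, and it is unnecessary anyway. Also, $C_{12}=0$ does not come from harmonicity. Harmonicity is what gives $C_{13}+C_{23}=F_{11}+F_{22}=-c\,\Delta\theta=0$. Instead, $C_{12}=0$ holds because $V_{12}$ is radial in the $(x_1,x_2)$-plane and $\theta=\arctan(x_2/x_1)$ is constant along rays.
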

\begin{proof}
After relabeling, suppose $g_1=g_2$. For $n=3$ use the helicoid patch in Proposition~\ref{prop:resonance}(1). For $n\ge4$, choose constants $a_3,\ldots,a_{n-1}$ and $c\ne0$ and consider, on the quadrant $x_1,x_2>0$, the graph
\begin{equation}\label{eq:shearedhelicoid}
x_n=a_3x_3+\cdots+a_{n-1}x_{n-1}+c\arctan(x_2/x_1).
\end{equation}
Write $h=L+c\theta$, where $L=a_3x_3+\cdots+a_{n-1}x_{n-1}$ and $\theta=\arctan(x_2/x_1)$. Since $\theta$ is harmonic and $\operatorname{Hess}\theta(\nabla\theta,\nabla\theta)=0$, the graph minimal-surface operator satisfies
\[
(1+|\nabla h|^2)\Delta h-\operatorname{Hess}h(\nabla h,\nabla h)=0.
\]
Thus the graph is minimal. Under $D_t$, write $y=D_tx$. Solving for the last coordinate gives
\[
y_n=\sum_{k=3}^{n-1}a_k t^{g_n-g_k}y_k+c\,t^{g_n}\arctan\!\left(\frac{y_2}{y_1}\right),
\]
because
\[
\frac{x_2}{x_1}=t^{g_1-g_2}\frac{y_2}{y_1}=\frac{y_2}{y_1}
\]
uses precisely the hypothesis $g_1=g_2$. Hence each diagonal image is again a graph of the same helicoidal form, with rescaled linear coefficients and pitch, and is minimal. This is the point at which equality of the two weights is essential.

For an independent verification, apply the pair-vector identities of Lemma~\ref{lem:pairvectors} and the dilation identity. The coefficients satisfy
\[
C_{12}=0,
\]
\[
C_{k\ell}=0\quad(3\le k<\ell\le n-1),\qquad
C_{kn}=0\quad(3\le k\le n-1),
\]
and, for $3\le k\le n-1$,
\[
C_{1k}+C_{2k}=-a_k^2(h_{11}+h_{22})=0,
\]
while
\[
C_{1n}+C_{2n}=-(h_{11}+h_{22})=0.
\]
Because $g_1=g_2$, the pairs $\{1,k\}$ and $\{2,k\}$ always lie in the same resonance class. Any additional coincidences among the remaining weights merely merge classes whose displayed coefficient sums are already zero. Hence \eqref{eq:dilationidentity} independently confirms minimality of every diagonal image. Finally,
\[
C_{1n}=-h_{11}=-\frac{2cx_1x_2}{(x_1^2+x_2^2)^2}\ne0
\]
on the chosen quadrant, so \eqref{eq:CijII} proves that the second fundamental form is not identically zero.
\end{proof}

\begin{remark}\label{rem:resonance-open}
Every equality between two distinct unordered pair sums falls into one of two types. If the two pairs share an index, cancellation of that common weight forces two weights to coincide; Proposition~\ref{prop:repeatedweights} shows that this shared-index resonance destroys rigidity in every dimension. The only remaining possibility is a four-index resonance, in which the two pairs are disjoint. Proposition~\ref{prop:resonance}(2) shows that such a resonance can also destroy rigidity in dimension four. Thus, after the repeated-weight case is removed, the unresolved regime consists of systems with pairwise distinct weights that contain at least one four-index resonance between disjoint pairs. It remains to determine which grouped coefficient systems in this regime still force $II=0$ in higher dimensions or in the presence of several simultaneous four-index resonances.
\end{remark}

\begin{corollary}[$D_t$-invariant case]\label{cor:invariant}
Let $n\ge3$ and let the positive weights be pair-sum nonresonant. If a connected embedded $C^2$ hypersurface $\Sigma\subset\mathbb R^n$ is minimal and satisfies $D_t\Sigma=\Sigma$ for every $t>0$, then $\Sigma$ is an open connected subset of a coordinate hyperplane $\{x_i=0\}$ for some $i$.
\end{corollary}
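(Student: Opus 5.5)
First reduce to Theorem~\ref{thm:main}. If $\Sigma$ is minimal and $D_t\Sigma=\Sigma$ for every $t>0$, then each $D_t\Sigma$ is minimal, so condition (1) of Theorem~\ref{thm:main} holds. Hence $\Sigma$ is a connected open subset of an affine hyperplane $P=\{x:a\cdot x=b\}$ with $a\neq0$. What remains is to show that invariance forces $b=0$ and forces $a$ to be a multiple of a single coordinate vector $e_i$.

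\textbf{Step 1: the hyperplane $P$ is itself $D_t$-invariant.} Since $D_t\Sigma=\Sigma\subset P$ and $D_t\Sigma\subset D_tP$, the open set $\Sigma$ lies in $P\cap D_tP$. Both sets are affine hyperplanes, and their intersection contains a nonempty relatively open $(n-1)$-dimensional subset of $P$, so $D_tP=P$. By Example~\ref{ex:affine}, $D_tP=\{y:\sum_i a_it^{-g_i}y_i=b\}$. Equality of the two hyperplanes therefore gives a scalar $\mu(t)\neq0$ with
\[
a_it^{-g_i}=\mu(t)a_i\quad\text{for all } i,\qquad b=\mu(t)b .
\]

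\textbf{Step 2: exploit distinct weights.} Pair-sum nonresonance with $n\ge3$ implies that the weights are pairwise distinct, as noted after Definition~\ref{def:nonresonance}. Suppose $a_i\ne0$ and $a_j\ne0$ with $i\ne j$. Then $t^{-g_i}=\mu(t)=t^{-g_j}$ for all $t$; taking any $t\neq1$ gives $g_i=g_j$, a contradiction. So exactly one $a_i$ is nonzero, and we may normalize $P=\{x_i=b/a_i\}$. For the constant term, $\mu(t)=t^{-g_i}\neq1$ when $t\neq1$ because $g_i>0$, so $b=\mu(t)b$ forces $b=0$. Thus $P=\{x_i=0\}$, and $\Sigma$ is an open connected subset of it.

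\textbf{Main obstacle.} The only delicate point is Step 1, namely passing from $\Sigma=D_t\Sigma$, where $\Sigma$ is only an open piece of $P$, to $D_tP=P$. This follows because two affine hyperplanes that share a nonempty relatively open subset coincide. An alternative is to take a point $x\in\Sigma$ and note that $D_tx\in P$ for all $t$; differentiating $a\cdot D_tx=b$ in $t$, and using enough points $x$ spanning $P$, gives the same conclusion. I would present the first argument because it is more direct.
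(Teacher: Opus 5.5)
Your proposal is correct and follows essentially the same route as the paper: Theorem~\ref{thm:main} gives an affine hyperplane $P$, the shared relatively open piece forces $D_tP=P$, and comparing coefficients gives $b=0$ and a single nonzero $a_i$ because the weights are distinct. The only difference is that you prove uniqueness of the nonzero coefficient before showing $b=0$, whereas the paper does these two steps in the opposite order.
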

\begin{proof}
Theorem~\ref{thm:main} first gives that $\Sigma$ is an open connected subset of an affine hyperplane $P=\{x:a\cdot x=b\}$. Since $D_t\Sigma=\Sigma$, the set $\Sigma$ is a nonempty relatively open subset of $P$ and is contained in $D_tP$. Two affine hyperplanes sharing such a relatively open set coincide, so $D_tP=P$. Equality of the two affine hyperplanes gives a scalar $\lambda(t)\ne0$ such that
\[
a_it^{-g_i}=\lambda(t)a_i\quad\text{for every }i,
\qquad b=\lambda(t)b.
\]
If $b\ne0$, then $\lambda(t)=1$ for all $t$, which would force $t^{-g_i}=1$ for every index with $a_i\ne0$, impossible because the weights are positive. Thus $b=0$. For any two indices with $a_i,a_j\ne0$ one then has $t^{-g_i}=t^{-g_j}$ for all $t$, so $g_i=g_j$. Pair-sum nonresonance implies pairwise distinct weights when $n\ge3$, hence exactly one coefficient of $a$ is nonzero. Therefore $P$ is a coordinate hyperplane.
\end{proof}

\begin{remark}
Conversely, every coordinate hyperplane is minimal and fixed by every $D_t$. More generally, a connected relatively open subset of a coordinate hyperplane satisfies the hypotheses of Corollary~\ref{cor:invariant} precisely when that subset is itself invariant under all $D_t$. Thus the corollary gives the exact geometric description of the $D_t$-invariant minimal hypersurfaces in the nonresonant regime; it does not assert that an arbitrary relatively open subset of a coordinate hyperplane is invariant.
\end{remark}

We now turn to the production application. Weighted homogeneity transports one isoquant to another, so the finite-dilation theorem becomes a finite-output-level criterion.

\begin{definition}\label{def:production}
Let $f:\mathbb R_{++}^n\to\mathbb R_{++}$ be $C^2$. We call $f$ a \emph{weighted-homogeneous production function} here if $f_i>0$ for all $i$ and
\begin{equation}\label{eq:weightedhom}
f(D_tx)=t^qf(x),\qquad q>0,
\end{equation}
for a positive diagonal dilation family. For $c>0$, the level set $M_c=f^{-1}(c)$ is called the \emph{isoquant} at output $c$; compare \cite{VilcuVilcu2019,NeacsuEtAl2024,Lloyd2012}. Its nonemptiness and regularity are consequences recorded in Lemma~\ref{lem:isoquanttransport}.
\end{definition}

\begin{lemma}[isoquant transport]\label{lem:isoquanttransport}
For every $c,t>0$,
\begin{equation}\label{eq:isoquanttransport}
D_t(M_c)=M_{t^qc}.
\end{equation}
Moreover, $M_c$ is nonempty and regular for every $c>0$, and its normal can be chosen with all coordinate components positive.
\end{lemma}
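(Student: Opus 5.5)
The plan has three parts: the transport identity, nonemptiness via orbits, and regularity with a positive normal.

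For the transport identity I use \eqref{eq:weightedhom} directly. Take $x\in M_c$. Then $D_tx\in\mathbb R^n_{++}$, since the weights act by positive scalings on each coordinate, and
\[
f(D_tx)=t^qf(x)=t^qc .
\]
Hence $D_t(M_c)\subset M_{t^qc}$. For the reverse inclusion, take $y\in M_{t^qc}$ and set $x=D_t^{-1}y=D_{1/t}y$. Applying \eqref{eq:weightedhom} with parameter $1/t$ gives $f(x)=t^{-q}f(y)=c$. So $y=D_tx$ with $x\in M_c$, which proves \eqref{eq:isoquanttransport}. The only facts used are that $D_t$ is a bijection of $\mathbb R^n_{++}$ and that $t\mapsto D_t$ is a group homomorphism, $D_sD_t=D_{st}$.

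Nonemptiness follows by moving along one orbit. Fix any $x_0\in\mathbb R^n_{++}$ and let $c_0=f(x_0)>0$. Along the orbit we have
\[
f(D_tx_0)=t^qc_0 .
\]
Because $q>0$, the map $t\mapsto t^qc_0$ is a bijection of $(0,\infty)$ onto itself. So every $c>0$ is attained at $t=(c/c_0)^{1/q}$, and $M_c\neq\emptyset$. The same conclusion also follows from \eqref{eq:isoquanttransport}, since $M_c=D_t(M_{c_0})$ for the appropriate $t$.

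Regularity and the sign of the normal come from the hypothesis $f_i>0$. Take $F=f-c$ as defining function on the open set $\mathbb R^n_{++}$. Its gradient $\nabla F=\nabla f$ has all components positive, so it is nonzero everywhere. Thus Definition~\ref{def:regular} holds globally, and $M_c$ is an embedded $C^2$ hypersurface by the regular value theorem. Choosing $\nu=\nabla f/|\nabla f|$ gives a unit normal with all coordinate components positive.

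There is no real obstacle here. The only point needing care is that $D_t$ preserves the open orthant, so that $f$ is defined at the transported points; this holds because the diagonal entries $t^{g_i}$ are positive.
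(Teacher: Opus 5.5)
Your proposal is correct and takes essentially the same approach as the paper: nonemptiness by sweeping one orbit $D_tx_0$, the two inclusions of $D_t(M_c)=M_{t^qc}$ from weighted homogeneity with parameters $t$ and $t^{-1}$, and regularity with a positive unit normal directly from $f_i>0$. Your explicit check that $D_t$ maps $\mathbb R^n_{++}$ onto itself, so that $f$ is defined at the transported points, is a detail the paper leaves implicit.
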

\begin{proof}
Choose any $x_0\in\mathbb R_{++}^n$. Since $t\mapsto t^qf(x_0)$ ranges over $(0,\infty)$, every level $c>0$ is attained along the orbit $D_tx_0$, so $M_c\ne\varnothing$. If $x\in M_c$, then $f(D_tx)=t^qc$, proving one inclusion in \eqref{eq:isoquanttransport}; applying $D_{t^{-1}}$ gives the reverse inclusion. Since every $f_i>0$, one has $\nabla f\ne0$, so every $M_c$ is regular, and $\nabla f/|\nabla f|$ has positive coordinate components.
\end{proof}

\begin{corollary}[finite-output-level isoquant rigidity]\label{cor:production}
Assume that the weights are pair-sum nonresonant and put $N=\binom n2$. Suppose there exist $N$ distinct output levels $c_1,\ldots,c_N>0$ for which the isoquants $M_{c_r}$ are Euclidean minimal. Then every isoquant is affine. More precisely, for any fixed reference level $c_0>0$ there are constants $A_i>0$ such that
\begin{equation}\label{eq:affineisoquant}
M_c=\left\{x\in\mathbb R_{++}^n:\sum_{i=1}^n A_i\left(\frac{c}{c_0}\right)^{-g_i/q}x_i=1\right\},
\end{equation}
and
\begin{equation}\label{eq:gaugerep}
f(x)=c_0\rho(x)^q,
\end{equation}
where $\rho(x)>0$ is the unique solution of
\begin{equation}\label{eq:gaugeeq}
\sum_{i=1}^nA_ix_i\rho(x)^{-g_i}=1.
\end{equation}
\end{corollary}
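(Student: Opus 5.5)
The plan is to reduce everything to Theorem~\ref{thm:main} applied to a single reference isoquant, transported by Lemma~\ref{lem:isoquanttransport}. Fix the reference level $c_0>0$ and put $\Sigma=M_{c_0}$. By Lemma~\ref{lem:isoquanttransport}, $M_{c_r}=D_{s_r}(M_{c_0})$ with $s_r=(c_r/c_0)^{1/q}$, and the $s_r$ are $N$ distinct positive numbers because the $c_r$ are distinct and $q>0$. We need $\Sigma$ to be a connected embedded $C^2$ hypersurface. Embeddedness and regularity follow from the regular value property in Lemma~\ref{lem:isoquanttransport}.

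Connectedness will be argued via the orbit structure. Each ray $\{D_tx_0:t>0\}$ meets $M_{c_0}$ in exactly one point, since $t\mapsto t^qf(x_0)$ is strictly increasing. Moreover, the map $x\mapsto D_{\tau(x)}x$ with $\tau(x)=(c_0/f(x))^{1/q}$ is a continuous retraction of the connected set $\mathbb R_{++}^n$ onto $M_{c_0}$. Hence $M_{c_0}$ is connected. With (2) of Theorem~\ref{thm:main} verified, we conclude that $M_{c_0}$ is a connected open subset of an affine hyperplane $P=\{a\cdot x=b\}$.

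Next we identify $P$ and its coefficients. The normal of $M_{c_0}$ has all components positive by Lemma~\ref{lem:isoquanttransport}, so we may take $a_i>0$. The main obstacle is showing $b\neq0$ and normalizing to $b=1$. Since $0\notin\overline{\mathbb R_{++}^n}$ in the relevant sense, we argue that $b=a\cdot x>0$ for any $x\in M_{c_0}\subset\mathbb R_{++}^n$; then set $A_i=a_i/b>0$. Next we show $M_{c_0}$ is all of $P\cap\mathbb R_{++}^n$. For any point $y\in P\cap\mathbb R_{++}^n$, its ray meets $M_{c_0}$ at a unique point $D_{\tau}y$. We then need $\tau=1$.

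For this step, the map $y\mapsto D_{\tau(y)}y$ restricted to the simplex-like connected set $P\cap\mathbb R_{++}^n$ is continuous and fixes the open subset $M_{c_0}$. The cleaner route is to count intersections. Each ray through $\mathbb R_{++}^n$ meets $P$ exactly once, because $t\mapsto\sum A_it^{g_i}x_i$ is strictly increasing from $0$ to $\infty$. Both $M_{c_0}$ and $P\cap\mathbb R_{++}^n$ are therefore graphs over the space of orbits. Since $M_{c_0}\subset P$ and $M_{c_0}$ meets every orbit, it follows that $M_{c_0}=P\cap\mathbb R_{++}^n$. This yields \eqref{eq:affineisoquant} at $c=c_0$.

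Finally, applying $D_t$ with $t=(c/c_0)^{1/q}$ and Example~\ref{ex:affine} gives $M_c=\{\sum A_it^{-g_i}x_i=1\}$, which is \eqref{eq:affineisoquant}. For the representation, given $x$ let $\rho=\rho(x)=(f(x)/c_0)^{1/q}$. Then $x\in M_{f(x)}$, so $\sum A_ix_i\rho^{-g_i}=1$. Uniqueness of the solution of \eqref{eq:gaugeeq} follows from strict monotonicity of $\rho\mapsto\sum A_ix_i\rho^{-g_i}$, which decreases from $\infty$ to $0$. This gives \eqref{eq:gaugerep}.
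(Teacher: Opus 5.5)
Your proof is correct and follows the same overall reduction as the paper: apply Theorem~\ref{thm:main} to one isoquant via Lemma~\ref{lem:isoquanttransport}, normalize the hyperplane using the positive normal, show the isoquant fills the positive part of the hyperplane, then transport and solve for $\rho$. Two sub-steps, however, are argued differently.

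\textbf{Connectedness.} The paper builds an explicit homeomorphism $M_c\cong\{z\in\mathbb R^n_{++}:z_n=1\}$, which also identifies the topology of every isoquant. Your continuous retraction $x\mapsto D_{\tau(x)}x$ of $\mathbb R^n_{++}$ onto $M_{c_0}$ gives connectedness with less work, and connectedness is all that Theorem~\ref{thm:main} needs.

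\textbf{Fullness.} The paper argues topologically:
\begin{itemize}
\item invariance of domain makes $M_{c_*}$ relatively open in $P$;
\item continuity of $f$ makes it relatively closed in $P\cap\mathbb R^n_{++}$;
\item connectedness of that convex set then forces equality.
\end{itemize}
Your orbit-counting argument works differently. Each $D_t$-orbit in $\mathbb R^n_{++}$ meets both $M_{c_0}$ and $P\cap\mathbb R^n_{++}$ exactly once, by monotonicity of $t\mapsto t^qf(x)$ and of $t\mapsto\sum_iA_it^{g_i}x_i$. This uses only the inclusion $M_{c_0}\subset P$, not openness, and exploits the weighted homogeneity directly. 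It is therefore more elementary and sidesteps invariance of domain at this stage.

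Applying the theorem at $c_0$ rather than at $c_1$ is immaterial, since condition (2) of Theorem~\ref{thm:main} does not require $1$ to be among the parameters.

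One small correction: the phrase ``$0\notin\overline{\mathbb R^n_{++}}$'' is false as written, since $0$ lies in that closure. It is also not needed, because $b=a\cdot x>0$ follows simply from $a_i>0$ and $x_i>0$.
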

\begin{proof}
We first record the global topology of the isoquants, which is needed before applying Theorem~\ref{thm:main}. Let $S=\{z\in\mathbb R_{++}^n:z_n=1\}$. The map
\[
\Phi_c:S\to M_c,\qquad \Phi_c(z)=D_{(c/f(z))^{1/q}}z,
\]
is continuous. Its inverse is
\[
\Psi_c(x)=D_{x_n^{-1/g_n}}x,
\]
which is also continuous. Hence $M_c\cong S\cong\mathbb R_{++}^{n-1}$ and is connected for every $c>0$.

Choose one of the given levels as $c_*=c_1$ and set $t_r=(c_r/c_*)^{1/q}$. The $t_r$ are distinct, and Lemma~\ref{lem:isoquanttransport} gives $D_{t_r}M_{c_*}=M_{c_r}$. The connected hypersurface $M_{c_*}$ therefore satisfies the hypotheses of Theorem~\ref{thm:main}, so it is an open subset of an affine hyperplane. Since $f_i>0$, its constant normal line is represented by a vector with positive components; after normalization the hyperplane has the form $\sum_iB_ix_i=1$ with $B_i>0$.

Because $M_{c_*}$ is embedded in this hyperplane, invariance of domain makes it relatively open there. Since $f$ is continuous, it is relatively closed in the intersection of the hyperplane with $\mathbb R_{++}^n$. The latter set is convex and connected, so $M_{c_*}$ equals its full positive part.

Transport by \eqref{eq:isoquanttransport} shows that every $M_c$ is the full positive part of an affine hyperplane. In particular, transport first from $c_*$ to the chosen reference level $c_0$ and write that hyperplane as $\sum_iA_ix_i=1$. A further transport from $c_0$ to $c$ then gives \eqref{eq:affineisoquant}. Finally, for fixed $x$, the function $s\mapsto\sum_iA_ix_is^{-g_i}$ decreases strictly from $+\infty$ to $0$. Thus \eqref{eq:gaugeeq} has a unique positive solution $\rho(x)$. The point $D_{\rho(x)^{-1}}x$ lies on $M_{c_0}$, and weighted homogeneity gives \eqref{eq:gaugerep}.
\end{proof}

\begin{example}[a nonresonant three-input realization]\label{ex:production}
Take $g=(1,2,3)$, $q>0$, $c_0>0$, and $A_1,A_2,A_3>0$. Define $\rho(x)>0$ implicitly by
\begin{equation}\label{eq:threeinputrho}
A_1x_1\rho^{-1}+A_2x_2\rho^{-2}+A_3x_3\rho^{-3}=1
\end{equation}
and set $f(x)=c_0\rho(x)^q$. The pair sums are $3,4,5$, so the weights are pair-sum nonresonant. The left-hand side of \eqref{eq:threeinputrho} is strictly decreasing in $\rho$, which gives existence and uniqueness. Implicit differentiation yields
\[
\rho_{x_i}=\frac{A_i\rho^{1-g_i}}{\sum_{j=1}^3g_jA_jx_j\rho^{-g_j}}>0,
\]
so $f_i>0$. At output $c$, $\rho=(c/c_0)^{1/q}$ and the isoquant is the affine plane
\[
\sum_{i=1}^3A_i\left(\frac{c}{c_0}\right)^{-g_i/q}x_i=1.
\]
Thus the hypotheses and conclusion of Corollary~\ref{cor:production} are simultaneously realized by a concrete nonresonant model.
\end{example}

The economic interpretation is immediate from \eqref{eq:affineisoquant}. With the convention $\operatorname{MRTS}_{ij}=f_i/f_j$,
\begin{equation}\label{eq:mrts}
\operatorname{MRTS}_{ij}\big|_{M_c}=\frac{A_i}{A_j}\left(\frac{c}{c_0}\right)^{-(g_i-g_j)/q}.
\end{equation}
Hence pairwise marginal rates of technical substitution are constant along a fixed isoquant but can vary across output levels when the weights differ. This is an MRTS statement only; no identification with Allen--Uzawa or other substitution elasticities is intended. The upper contour sets are intersections of the positive orthant with half-spaces, so the represented technology is quasi-concave. The level-set and duality interpretation of isoquants is standard in production theory \cite{Shephard1970,Lloyd2012}.

\section{Discussion and concluding remarks}\label{sec:discussion}

The proof separates arithmetic from geometry. Pair-sum nonresonance makes the $\binom n2$ dilation exponents distinct, so the same number of minimal diagonal images determines every pair coefficient. Lemma~\ref{lem:allcoeffflat} then shows that no additional coordinate-normal assumption is needed: coefficientwise vanishing already forces $II=0$. In dimension three this means that three distinct diagonal images suffice whenever the three weights are distinct.

The resonant examples mark the limitation of coefficient separation. Proposition~\ref{prop:repeatedweights} shows that repeated weights support nonflat helicoidal examples in every dimension, while the four-index resonance $1+4=2+3$ supports the invariant quadratic cone \eqref{eq:cone} in dimension four. In the nonresonant regime, Corollary~\ref{cor:invariant} gives a sharp contrast: an invariant minimal hypersurface must be an open piece of a coordinate hyperplane. These examples establish genuine failure of rigidity in important resonant regimes. A resonance between two pairs that share an index is exactly a repeated-weight resonance and is therefore settled by Proposition~\ref{prop:repeatedweights}. The remaining unresolved case is four-index resonance between disjoint pairs. The dimension-four cone shows that such resonance can destroy rigidity, but a complete characterization of the higher-dimensional grouped coefficient systems that still force $II=0$ remains open.

For production functions, the finite-dilation theorem has a particularly concrete consequence: under pair-sum nonresonance, minimality at only $\binom n2$ distinct output levels forces the entire weighted-homogeneous isoquant family to be affine and yields the representation \eqref{eq:gaugerep}--\eqref{eq:gaugeeq}. Ordinary homogeneity lies at the maximally resonant end of the spectrum, which is consistent with the richer homogeneous minimal-isoquant families found by Fu and Luo \cite{FuLuo2025}. The production application is therefore a consequence of the geometric rigidity mechanism, not a replacement for existing production-function classifications.

\end{document}